\newtheorem{thm}{Theorem}[section]
\newtheorem{lem}{Lemma}[section]
\newtheorem{defn}[thm]{Definition}
\newtheorem{rem}[thm]{Remark}
\newtheorem{conje}[thm]{Conjecture}
\begin{document}
\title{On periods of Herman rings and relevant poles}
\author{Subhasis Ghora\footnote{School of Basic Sciences, IIT Bhubaneswar, Bhubaneswar, India (sg36@iitbbs.ac.in).}\footnote{Corresponding author.} 
\and Tarakanta Nayak\footnote{School of Basic Sciences, IIT Bhubaneswar, Bhubaneswar, India (tnayak@iitbbs.ac.in).} 
 }

\date{}
\maketitle

		\begin{abstract}
	
Possible periods of Herman rings  are studied for general meromorphic functions with at least one omitted value. A pole is called $H$-relevant for a Herman ring $H$ of such a function $f$ if it is surrounded by some Herman ring of the cycle containing $H$. In this article, a lower bound on the period $p$ of a Herman ring $H$ is found in terms of the number of  $H$-relevant poles, say $h$. More precisely, it is shown that $p\geq \frac{h(h+1)}{2}$ whenever $f^j(H)$, for some $j$, surrounds a pole as well as the set of all omitted values of $f$.  It is proved that $p \geq \frac{h(h+3)}{2}$ in the other situation. Sufficient conditions are found under which equalities hold.
			It is also proved that if an omitted value is contained in the closure of an invariant or a two periodic Fatou component then the function does not have any Herman ring. 
		 
		\end{abstract}
	
\textit{Keyword:}
 Omitted values, Herman rings and Transcendental meromorphic functions.\\
 Mathematics Subject Classification(2010) 37F10,  37F45

	\section{Introduction}

Let $f:\mathbb{C}\rightarrow \widehat{\mathbb{C}}$ be a transcendental meromorphic function such that it has either at least two poles or exactly one pole which is not an omitted value. For such functions, there are infinitely many points whose iterated forward image is infinity, the only essential singularity of the function. This is the reason why these are called general meromorphic functions. The class of all such functions is denoted by $M$ in the literature \cite{Ber93a}. A family of meromorphic functions defined on a domain is called normal if each sequence taken from the family has a subsequence that converges uniformly  on every compact subset of the domain. The limit is allowed to be infinity. 
The Fatou set of $f$ is the set of all points in a neighborhood of which the family of functions  $\{f^{n}\}_{n>0}$ is well defined and normal. For general meromorphic functions, normality is in fact redundant. More precisely, the Fatou set of a general meromorphic function is the set of all points where $f^n$ is defined for all $n$ \cite{Ber93a}. Its complement is the Julia set and is denoted by $\mathcal{J}(f)$. 

  The Fatou set is open by definition. A maximal connected subset of the Fatou set is called a {Fatou component}. For a {Fatou component} $U$ and a natural number $k$, let $U_{k}$ denote the {Fatou component} containing $f^{k}(U)$. A {Fatou component} $U$ is called $p$-periodic if $p$ is the smallest natural number satisfying $U_p = U $. We say $U$ is invariant if $p=1$. The connectivity of a periodic Fatou component is known to be $1,~2$ or $\infty$ \cite{Ber93a}. The sequence of iterates $f^n$ has finitely many limit functions on a periodic Fatou component. Depending on whether such limit functions are constants or not, a periodic Fatou component can be an attracting domain, a parabolic domain, a Baker domain, a Siegel disk, or a Herman ring. The last two possibilities arise precisely when the limit functions are non-constant. This article is mainly concerned with Herman rings. 
 
   \par 
    A $p$-periodic Fatou component $H$ is called a Herman ring if there exists an analytic homeomorphism $\phi : H \rightarrow \{z:1<|z|<r\}$ such that $f^p$ is conformally conjugate to an irrational rotation. In other words, $\phi(f^p(\phi^{-1}(z)))=e^{i 2 \pi \alpha}z$ for some irrational number $\alpha$ and for all $z, 1<|z|<r$. Clearly, a Herman ring is doubly connected.
    
    \par 
   Using the Maximum Modulus Principle, it can be shown that  transcendental entire functions cannot have any Herman ring. However, general meromorphic functions are known to have Herman rings. A meromorphic function of finite order can have at most finitely many Herman rings whereas, there are transcendental meromorphic functions having infinitely many Herman rings. This is proved by Zheng in~ \cite{zheng}. It is known that if a transcendental meromorphic function has $N$ poles then it has at most $N$ invariant Herman rings \cite{peter}. Dominguez and Fagella showed that, for a given $N>0$, there exists an $f \in M$ with exactly $N$ poles and $N$ invariant Herman rings \cite{fagellaD}. Herman rings for general meromorphic function satisfying other conditions are also constructed by the authors. 
    
\par
 A Herman ring is always doubly connected giving rise to a disconnected Julia set. In other words, a connected Julia set ensures the non-existence of Herman rings. Baranski and co-authors proved that transcendental meromorphic functions arising as Newton maps of  entire functions have connected Julia sets, and hence have no Herman ring \cite{Bar2018}. Another class of functions, namely those general meromorphic functions omitting at least one value is studied by Nayak \cite{Nayak2016} and, Nayak and Zheng \cite{tk}. A value $z_{0}\in \widehat{\mathbb{C}}$ is said to be an \textit{omitted value} of a function $f$ if $f(z)\neq z_{0}$ for any $z\in\mathbb{C}$. A number of sufficient conditions guaranteeing the non-existence of Herman ring are provided by the authors. They proved the following results. If all the poles of such a function are multiple, then it has no Herman ring. Functions with a single pole or with at least two poles, one of which is an omitted value, have no Herman
ring. Examples of functions which has no Herman ring are also provided in ~\cite{small}. In view of all these, following conjecture can be made.
      
      \begin{conje}
     If a general meromorphic function omits at least one point in the plane then it does not have any Herman ring.
      \label{nohermanring}
      \end{conje}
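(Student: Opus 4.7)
The plan is to leverage the period bound machinery developed in this paper together with the known partial results cited above. First I would reduce to the genuinely hard case. If all poles of $f$ are multiple, or $f$ has a single pole, or the omitted value $a$ is itself a pole, then Nayak and Zheng have already ruled out Herman rings. So I may assume $f$ has at least two simple poles, none of which equals $a$, and that $a\in\mathbb{C}$ is a finite omitted value. Suppose for contradiction that $f$ carries a Herman ring $H$ of period $p$ with cycle $H=H_0,H_1,\ldots,H_{p-1}$.

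Next I would exploit the topology of the cycle. Each $H_j$ is doubly connected, so it divides $\widehat{\mathbb{C}}$ into two components; call the one not containing $\infty$ of $H_j$ its "inside" whenever this is well-defined. Classical singular-value arguments force the inside of every $H_j$ to contain a singular value whose forward orbit accumulates on $\partial H_j$. The notion of an $H$-relevant pole introduced in the paper organizes exactly this information, and the bounds $p\ge h(h+1)/2$ and $p\ge h(h+3)/2$ show that each relevant pole costs the period quite a lot. I would then try to show that an omitted value forces many relevant poles: because $a$ has empty preimage, the map $f$ restricted to the inside of any $H_j$ that surrounds $a$ must wrap its boundary around every point of $\mathbb{C}\setminus\{a\}$, and in particular around every pole of $f$; pushing this through one full cycle of length $p$ should produce a Riemann--Hurwitz style inequality forcing $h$ to be large enough that the combined period bound exceeds $p$, a contradiction.

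The main obstacle will be the low-$h$ regime, and especially $h=0$. When no pole of $f$ lies inside any $H_j$, the authors' bounds are vacuous, so a separate argument is needed. Here I would use the asymptotic tract of $a$: there is an unbounded curve $\gamma$ with $f(z)\to a$ along $\gamma$, and $\gamma$ must accumulate at $\infty$, not in the Fatou set. If no $H_j$ surrounds a pole, then the cycle lives in a region where $f^p$ is essentially a proper map between annular-type domains; combined with the rotation dynamics on $H$, one expects to derive that $f^p$ has bounded valence on a neighborhood of $H$, which is incompatible with the existence of a nearby asymptotic tract for $a$ created by iterating the tract of $f$. A clean statement of this incompatibility, probably via the Ahlfors islands theorem or a logarithmic-change-of-variables argument, is what I expect to be the hardest technical step and the reason the statement is currently only a conjecture.

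Finally, the intermediate regime $1\le h$ small but positive would be handled by hybridizing the two arguments: use the period bound to force $p$ to be moderately large, and then use the asymptotic-tract obstruction to rule out the remaining configurations case by case, organized by whether some $H_j$ surrounds both a pole and the omitted value (the first case of the paper) or not (the second case). If successful, this would unify the period inequalities of this paper with the omitted-value hypothesis into a single non-existence statement, settling the conjecture.
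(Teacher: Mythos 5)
This statement is Conjecture~\ref{nohermanring}; the paper does not prove it, and your proposal does not close the gap either --- it is a research programme whose decisive steps are missing or, in places, contradicted by results already in the paper. The central move you propose is to show that an omitted value forces the number $h$ of $H$-relevant poles to be so large that the bound $p\geq \frac{h(h+1)}{2}$ exceeds $p$ itself. But nothing in your sketch produces such a lower bound on $h$ in terms of $p$, and your heuristic for it is false: you claim that $f$ restricted to $B(H_j)$ must ``wrap its boundary around every point of $\mathbb{C}\setminus\{a\}$,'' whereas Lemma~\ref{one-one} states that $f$ is \emph{injective} on $B(H)$ for every Herman ring $H$ of a function in $M_o$. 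There is no Riemann--Hurwitz excess to exploit there; the injectivity is precisely what makes the configurations with small $h$ (e.g.\ $h=2$, $p\geq 3$) so hard to exclude, and the paper's companion results only rule out multiplicity of such cycles for $p=3,4$, not their existence.

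Your identification of $h=0$ as the main obstacle is also misplaced: by Lemma~\ref{basiclemma} and Remark~\ref{rem-basiclemma} every cycle of Herman rings contains a ring surrounding a pole, and since a function in $M_o$ with one pole has no Herman ring, in fact $h\geq 2$ always. So the regime your asymptotic-tract argument targets never occurs, while the regime that actually matters (moderate $h$ with $p$ exactly around $\frac{h(h+1)}{2}$) is deferred to an unspecified ``case by case'' analysis. You are candid that the hardest technical step is unresolved; that candor is correct, but it means this is an outline of why the conjecture is plausible, not a proof of it.
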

      This is the motivation for the current work. 
       
 \par   

 Let $O_f$ denote the set of all omitted values of $f$. Note that $O_f$ consists of at most two points, and is a subset of the plane whenever $f \in M$. Let $M_o$ be the set of all functions in $M$ having at least one omitted value. All functions considered in this article belong to $M_o$.

Nayak has proved that, if $f \in M_o$ then $f$ has no Herman ring of period $1$ or $2$~\cite{Nayak2016}.  The proof contains a detailed analysis of the possible arrangements of Herman rings in the plane relative to each other. We say a set is surrounded by a Herman ring $H$ if the set is contained in the bounded component of the complement of $H$. The locations of the omitted value(s) and poles surrounded by Herman rings have also been key to a number of useful observations. Later, these observations are used to show that there cannot be more than one $p$-cycles of Herman rings for $p=3,4$ \cite{small}. These ideas are developed and used in this article to prove a lower bound for periods of Herman rings and non-existence of the same under certain situation.

Given a Herman ring $H$ of $f$, a pole $w$ is said to be $H$-relevant if some Herman ring, in short ring $H_i$ of the cycle containing $H$ surrounds $w$. It is important to note that every cycle of Herman rings contains at least one ring surrounding some pole Lemma $2.1$, \cite{tnayak-2015}.
A lower bound on the period $p$ of a Herman ring $H$ is found in terms of $h$, the number of $H$-relevant poles. More precisely, it is shown that $p\geq \frac{h(h+1)}{2}$ whenever the basic nest (See Section 2 for definition) surrounds a pole. Less technically, this condition is equivalent to the statement that $H_j$, for some $j$, surrounds a pole as well as $O_f$.  It is proved that $p \geq \frac{h(h+3)}{2}$ in the other situation, when the basic nest does not surround any pole. This is the statement of Theorem~\ref{lowerbound}. The innermost ring $H_1$ with respect to the set $O_f$ is the ring which surrounds $O_f$ but does not surround any other ring of the same cycle.
  It follows from Lemma~\ref{basiclemma} (which originally appeared in ~\cite{tk}) that $H_{1+k}$, the ring containing $f^k (H_1)$ surrounds a pole of $f$ for some $k$.
  The smallest such natural number is what we refer as the length of the basic chain. It is seen that $h$, the number of $H$-relevant poles is at most the length of the basic chain (Lemma~\ref{poles-lengthofbc}). When these two numbers are same or differ by exactly $1$, we are able to prove equality in Theorem~\ref{lowerbound} under some additional condition. This is given in Theorem \ref{lowerbound-equality}. A ring is called outermost if it is not surrounded by any other ring (Section $2$ can be seen for definition) of the cycle. Note that for a cycle of Herman rings, there can be more than one outermost ring. Theorem~\ref{lowerbound-equality} proves the following.
   If each ring surrounding a pole is outermost then $(i)$ $p=\frac{h(h+1)}{2}$ when the length of the basic chain is $h$, and $(ii)$ $p= \frac{h(h+3)}{2}$ when the length of the basic chain is $h+1$ and the basic nest does not surround any pole (equivalently, there is no ring surrounding $O_f$ as well as a pole of $f$). It is worth noting that the assumption of $(i)$ ensures that the basic nest surrounds a pole. The condition that each ring surrounding a pole is the outermost ring  of the nest  is satisfied whenever the period of a Herman ring is $3$ (\cite{small}). Theorem~\ref{periodicfatoucomponent} proves that if an omitted value is contained in the closure of a periodic Fatou component $U$ of $f$ and $f$ has a Herman ring $H$ then the number of $H$-relevant poles is strictly less than the period of $U$.
  This leads to non-existence of Herman rings whenever $U$ is invariant or $2$-periodic.  This is a new condition under which Conjecture~\ref{nohermanring} is true.

  \par  
  Section $2$ discusses all the preliminary ideas and known results required for the proofs in the next section. All the new results are stated and proved in Section $3$.
\par    
We reserve the notation $f$ for functions in $M_o$ throughout this article. By a ring, we mean a Herman ring in this article. For a ring $H$, let $B(H)$ denote the bounded component of the complement of $H$. We say $H$ surrounds a set $A$ (or a point $w$) if $A \subset B(H)$ (or if $w \in B(H)$ respectively). For a $p$-periodic Herman ring $H$, denote the cycle of $H$ by $\{H_0,~H_1,\dots ,H_{p-1}\}$, where $H=H_0=H_p$. In view of Theorem $1.3$, \cite{Nayak2016} which states that $f \in M_o$ has no Herman ring of period $1$ or $2$, we assume that $p >2$ throughout.

\section{Preliminaries}
 A Jordan curve in a multiply connected Fatou component of a  meromorphic function can be considered such that it is not contractible in the Fatou component.
  Since the backward orbit of $\infty$ does not intersect the Fatou set, $f^n$ is well defined for all $n$ on such a Jordan curve. The following lemma, proved in~\cite{tk} analyzes the iterated forward images of such a Jordan curve leading to useful conclusions.  
\begin{lem}
Let $f \in M$ and $V$ be a multiply connected Fatou component of $f$. Also let $\gamma$ be a non-contractible closed curve in $V$ (that means $B(\gamma)\cap \mathcal{J}(f) \neq \phi$). Then there exists an $n \in \mathbb{N} \cup \{0\}$ and a closed curve $\gamma_n \subset f^n(\gamma)$ in $V_n$ such that $B(\gamma_n)$ contains a pole of $f$. Further, if $O_f \neq \emptyset$, then $O_f \subset B(\gamma_{n+1})$ for some closed curve $\gamma_{n+1}$ contained in $f(\gamma_n)$.  
\label{basiclemma}
\end{lem}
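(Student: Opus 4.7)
To prove the first assertion, I would start with the following observation. Non-contractibility of $\gamma$ in $V$ forces $B(\gamma)$ to meet $\mathcal{J}(f)$; since for $f \in M$ we have $\mathcal{J}(f) = \overline{\bigcup_{k \geq 0} f^{-k}(\infty)}$ and $B(\gamma)$ is open, there is some smallest $n_0 \geq 1$ together with a point $z_0 \in B(\gamma)$ satisfying $f^{n_0}(z_0) = \infty$. By the minimality of $n_0$, no earlier iterate of any point in $B(\gamma)$ reaches $\infty$, so $f^{n_0-1}$ is holomorphic on a neighborhood of $\overline{B(\gamma)}$ (recall $\gamma \subset V$, so $f^{k}$ is defined on $\gamma$ for all $k$), and $w := f^{n_0-1}(z_0)$ is a pole of $f$.

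Setting $n := n_0 - 1$, the open mapping theorem shows that $f^n(B(\gamma))$ is open and contains $w$; continuity on $\overline{B(\gamma)}$ makes its closure compact, and its topological boundary is contained in $f^n(\gamma) \subset V_n$. Taking the connected component of $f^n(B(\gamma))$ containing $w$ and extracting a Jordan subcurve $\gamma_n$ of $f^n(\gamma)$ from the outer boundary of this component yields a simple closed curve in $V_n$ with $w \in B(\gamma_n)$, as required.

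The second assertion then follows from the argument principle. For each $c \in O_f$, the function $f - c$ has no zero in $B(\gamma_n)$ (because $c \notin f(\mathbb{C})$), while having at least one pole there (namely $w$). Hence the winding number of $f(\gamma_n)$ about $c$ equals $-P$, where $P \geq 1$ is the total multiplicity of poles of $f$ in $B(\gamma_n)$. In particular each $c \in O_f$ lies strictly inside $f(\gamma_n)$, i.e., in a bounded component of $\mathbb{C} \setminus f(\gamma_n)$. Choosing $\gamma_{n+1}$ to be the outer Jordan boundary of the unbounded complementary component of $f(\gamma_n)$ places every bounded component, and hence all of $O_f$, inside $B(\gamma_{n+1})$; since $\gamma_{n+1} \subset f(\gamma_n) \subset V_{n+1}$, this completes the argument.

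The main obstacle, as I see it, is the plane-topology step of extracting simple closed curves from the possibly self-intersecting images $f^n(\gamma)$ and $f(\gamma_n)$. Although both are images of a Jordan curve under a holomorphic map (hence piecewise analytic), one must invoke a decomposition-into-Jordan-loops argument to produce the required $\gamma_n$ and $\gamma_{n+1}$ sitting inside the respective image curves. Everything else — the identification of the Julia set with the closure of the backward orbit of $\infty$, and the winding-number computation — is routine for $f \in M$.
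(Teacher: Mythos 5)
The paper does not actually prove this lemma; it is imported verbatim from Nayak--Zheng \cite{tk}, so there is no internal proof to compare against. Your argument is correct and is essentially the standard one: you use $\mathcal{J}(f)=\overline{\bigcup_{k\ge 0}f^{-k}(\infty)}$ for $f\in M$ to find a minimal $n_0$ with a prepole of order $n_0$ in $B(\gamma)$, note that $f^{n_0-1}$ is then pole-free on $\overline{B(\gamma)}$ so that $\partial\bigl(f^{n_0-1}(B(\gamma))\bigr)\subset f^{n_0-1}(\gamma)$ traps the pole $w$ in a bounded complementary component, and then push forward once more. The only cosmetic difference from the usual proof of the second assertion is that you invoke the argument principle (no zeros of $f-c$, at least one pole in $B(\gamma_n)$, hence nonzero winding number of $f\circ\gamma_n$ about each $c\in O_f$), whereas the reference argues that $f(B(\gamma_n))$ covers a neighbourhood of $\infty$, so the unbounded component of the complement of $f(\gamma_n)$ lies inside $f(B(\gamma_n))$ and therefore avoids $O_f$; the two are interchangeable. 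You correctly flag the one genuinely fussy point, namely extracting a Jordan subcurve of the piecewise-analytic image curve whose bounded complementary component contains the relevant points; this is standard plane topology for analytic images of circles and is also glossed over in the source, so I see no gap.
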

That a multiply connected Fatou component corresponds to a pole follows from the above lemma. A Herman ring is doubly connected. Above lemma applied to a Herman ring gives rise to the following.
\begin{rem}
Let $H$ be a $p$-periodic Herman ring of $f$ and\\ $\phi : H \to A=\{z :1 < |z|<r\}$ be the analytic homeomorphism such that $\phi(f^p(\phi^{-1}(z)))=e^{i 2 \pi \alpha}z$ for some irrational number $\alpha$ and for all $z \in A$. If $\gamma$ is the pre-image of a circle $\{z: |z|=r' \}$ for $1 < r'< r$ under $\phi$ then $\gamma$ is an $f^p$-invariant and non-contractible Jordan curve in $V$ and the set $\{\gamma_n := f^{n} (\gamma): n>0\}$ is a finite set of $f^p$-invariant Jordan curves. Further, there is a $j$ such that $\gamma_j$ surrounds a pole of $f$.
\label{rem-basiclemma}\end{rem} 
Recall that for a $p$-periodic Herman ring $H$, the cycle of $H$ is denoted by $\{H_0,~H_1,\dots ,H_{p-1}\}$, where $H=H_0=H_p$.  All the definitions given and used in this article are with respect to $H$.  The next two definitions were introduced in \cite{Nayak2016}.

\begin{defn}{\bf{(H-relevant pole)}}\\
Given a Herman ring $H$, a pole $w$ is said to be $H$-relevant if some ring $H_i$ of the cycle containing $H$ surrounds $w$.
\end{defn}

It is clear from Remark~\ref{rem-basiclemma} that an $f^p$-invariant Jordan curve surrounds a pole whenever $f$ has a $p$-periodic Herman ring. Since the curve is in a ring, the existence of at least one $H$-relevant pole is evident.  A refinement of this statement is implicit in the following theorem, proved in ~\cite{Nayak2016}.
\begin{thm}
If $f \in M_o$ has only one pole, then $f$ has no Herman ring.
\end{thm}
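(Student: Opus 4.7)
The plan is to assume for contradiction that $f$ has a Herman ring $H_0$ of period $p\geq 3$ and derive an inconsistency from the uniqueness of the pole $w$ together with an omitted value $o\in O_f$. First I would fix an $f^p$-invariant cycle of non-contractible Jordan curves $\gamma_i\subset H_i$ with $f(\gamma_i)=\gamma_{i+1}$ (Remark~\ref{rem-basiclemma}); because $w$ is the only pole, Lemma~\ref{basiclemma} gives an index $a$ with $w\in B(\gamma_a)$, and the ``further'' clause then places $O_f$, hence $o$, inside $B(\gamma_{a+1})$.

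The core analytic input is a winding-number computation on each disk $B(\gamma_i)$. The argument principle reads
\[
\mathrm{wind}(f(\gamma_i),\, c) \;=\; \#\{f=c \text{ in } B(\gamma_i)\} \;-\; \#\{\text{poles of }f\text{ in }B(\gamma_i)\}.
\]
Taking $c=o$ makes the left side equal to $-k$, where $k$ is the order of $w$ when $w\in B(\gamma_i)$ and $0$ otherwise. If $w\in B(\gamma_i)$, then $\gamma_{i+1}$ encloses $o$, so this winding is $\pm 1$ and $k=1$; the remaining winding computations then show that $f$ restricts to a conformal bijection
\[
f : B(\gamma_i) \;\longrightarrow\; \widehat{\mathbb{C}} \setminus \overline{B(\gamma_{i+1})}.
\]
If instead $w\notin B(\gamma_i)$, an analogous calculation gives a bijection $f : B(\gamma_i)\to B(\gamma_{i+1})$ and shows $o\notin B(\gamma_{i+1})$.

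The contradiction then comes from the cyclic shift. Consider first the case that every $H_i$ surrounds $w$; the rings are then mutually disjoint topological disks about the common point $w$, hence totally nested, giving a strict chain $B(\gamma_{(1)})\subsetneq B(\gamma_{(2)})\subsetneq\cdots\subsetneq B(\gamma_{(p)})$ under the nesting order. Injectivity of $f$ together with the bijections above forces the cyclic permutation $\sigma$ that $f$ induces on these indices to be strictly order-reversing: $j<j'$ implies $\sigma(j)>\sigma(j')$. But any strictly order-reversing bijection of $\{1,\ldots,p\}$ is an involution, and so cannot be a single $p$-cycle for $p\geq 3$; this is the desired contradiction.

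In the complementary situation some ring fails to surround $w$, so the bijections along the cycle alternate between the ``$B\to\widehat{\mathbb{C}}\setminus\overline{B}$'' and ``$B\to B$'' types. Tracking preimage counts around the full period, and using crucially that the omitted value $o$ can never lie in the image of any $f|_{B(\gamma_i)}$ since $o$ has no preimages at all, reduces the argument to the same kind of nesting/cyclic-shift obstruction as above. Carrying out this reduction carefully, while keeping consistent bookkeeping of which rings surround $w$ and which surround $o$, is the main technical obstacle of the proof.
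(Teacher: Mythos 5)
The paper does not actually prove this statement; it is quoted verbatim from \cite{Nayak2016}, so your attempt can only be measured against the toolkit the paper builds around it (Lemma~\ref{basiclemma}, Lemma~\ref{one-one}, and the nest/chain bookkeeping). Judged that way, there is a genuine gap: you only close the case in which \emph{every} ring of the cycle surrounds the unique pole $w$. There the order-reversal argument is sound --- the disks $B(\gamma_i)$ all contain $w$, hence are totally nested, and $B(\gamma_i)\subsetneq B(\gamma_j)$ forces $B(\gamma_{j+1})\subsetneq B(\gamma_{i+1})$ because the images are exactly the complements $\widehat{\mathbb{C}}\setminus\overline{B(\gamma_{\cdot+1})}$, so the shift is order-reversing and cannot be a $p$-cycle for $p\geq 3$. (That case is in fact degenerate: the innermost ring with respect to $O_f$ never surrounds a pole.) But the complementary case, where some rings do not surround $w$, is the substantive one, and you explicitly defer it. Your two structural claims about it are not right as stated: the two types of maps do not ``alternate,'' since arbitrarily many consecutive rings can fail to surround $w$ (this is exactly why the paper has to introduce chains of unbounded length); and the disks $B(\gamma_i)$ of rings not containing $w$ need not be pairwise nested, so there is no total order on which a ``cyclic-shift obstruction'' can act. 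Some genuinely new mechanism is needed here --- in the paper's language, one tracks the basic chain $H_1,\dots,H_n$ from the innermost ring around $O_f$ to the first ring around $w$ and uses conformality of $f^{n-1}$ on $B(H_1)$ together with Lemma~\ref{one-one} to force a collapse; none of that is present in your sketch.

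A secondary but real issue is the step ``this winding is $\pm 1$ and $k=1$.'' The winding number of the parametrized loop $f\circ\gamma_i$ about a point of $B(\gamma_{i+1})$ is $\pm d$, where $d$ is the topological degree of $f:\gamma_i\to\gamma_{i+1}$, not automatically $\pm 1$; the argument principle therefore gives $k=d$, not $k=1$. Injectivity of $f$ on $B(H)$ is precisely the content of the nontrivial Lemma~\ref{one-one} and cannot be read off from the index of a Jordan curve. Fortunately your order-reversal step only needs that the image of $B(\gamma_i)$ equals $\widehat{\mathbb{C}}\setminus\overline{B(\gamma_{i+1})}$, which survives without $k=1$; but any completion of the missing case along the paper's lines will need the injectivity statement in full, so it should be proved or correctly cited rather than derived from the winding count.
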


 It follows from the above theorem that the number of $H$-relevant poles is at least $2$ for every function $f \in M_o$. 
 
The position of rings relative to each other is going to play an important role in our investigation.
\begin{defn}{\bf{(H-maximal nest)}}\\
Given a Herman ring $H$, a ring $H_j$ is called an $H$-outermost ring if $H_i$ does not surround $H_j$ for any $i$, $i \neq j$. Given an  outermost ring $H_j$, the collection of all rings consisting of $H_j$ and all those in the same cycle surrounded by $H_j$ is called an $H$-maximal nest. We call it simply a nest whenever $H$ is understood from the context.
\end{defn}
 A nest is a sub-collection of Herman rings from the periodic cycle containing $H$. Each $H_i$ belongs to exactly one nest.
By saying a nest surrounds a point (or a set), we mean the outermost ring of the nest surrounds the point (or the set respectively). This is also true whenever any other ring  belonging to the nest surrounds the point or the set. It is important to note that a nest surrounds at most one pole. This follows from Lemma 2.4, ~\cite{Nayak2016}, which is stated below.
 \begin{lem}
 If $H$ is a Herman ring of $f \in M_o$, then $f:B(H)\rightarrow  \widehat{\mathbb{C}}$ is one-one.
 \label{one-one}
 \end{lem}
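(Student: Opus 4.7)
The plan is to apply the argument principle to $f$ on a Jordan domain slightly larger than $B(H)$ and then use the existence of an omitted value to rule out poles of $f$ in that domain, forcing the mapping degree to equal one. By Remark~\ref{rem-basiclemma} there is a one-parameter family of $f^p$-invariant Jordan curves $\gamma(r'):=\phi^{-1}(\{z:|z|=r'\})\subset H$, $1<r'<r$. Their images $f(\gamma(r'))$ are pairwise disjoint Jordan curves foliating $H_1$, so the set of $r'$ for which $f(\gamma(r'))$ meets the finite set $O_f$ has cardinality at most $|O_f|\leq 2$. I would fix some other $r'$ and set $\gamma:=\gamma(r')$, $\gamma_1:=f(\gamma)$, and $D:=B(\gamma)$: a Jordan domain containing $B(H)$ and with $O_f\cap\gamma_1=\emptyset$.

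I would next record that $f|_H\colon H\to H_1$ is a biholomorphism. Since $f^p|_H$ is a degree-$1$ biholomorphism conjugate to an irrational rotation, and its degree equals the product of the degrees of the $p$ restrictions $f\colon H_i\to H_{i+1}$ along the cycle, each restriction has degree exactly $1$. Consequently $f\circ\gamma$ traverses $\gamma_1$ exactly once in the positive sense, so in particular $f|_\gamma\colon\gamma\to\gamma_1$ is a homeomorphism of Jordan curves.

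The main step is the argument principle on $D$: for every $c\notin\gamma_1$,
\[
N(c)-P=n(f\circ\gamma,\,c),
\]
where $N(c)$ counts pre-images of $c$ in $D$ with multiplicity, $P$ counts poles of $f$ in $D$ with multiplicity, and the winding number on the right equals $1$ for $c\in B(\gamma_1)$ and $0$ for $c$ in the unbounded component of $\mathbb{C}\setminus\gamma_1$. The delicate point---and the one I expect to be the main obstacle---is excluding poles from $D$. If $P\geq 1$, then $N(c)\geq 1$ for every $c\in\mathbb{C}\setminus\gamma_1$, whence $f(D)\supseteq\mathbb{C}\setminus\gamma_1$; since any omitted value $o$ lies in $\mathbb{C}\setminus\gamma_1$ by our choice of $\gamma$, this forces $o\in f(D)\subset f(\mathbb{C})$, contradicting $f\in M_o$. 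Hence $P=0$, and the displayed identity reduces to $N(c)=1$ on $B(\gamma_1)$ and $N(c)=0$ elsewhere; thus $f\colon D\to B(\gamma_1)$ is a proper holomorphic map of degree $1$, i.e.\ a biholomorphism. In particular $f$ is one-one on $B(H)\subset D$.
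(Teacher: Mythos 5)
The paper does not prove this lemma itself --- it quotes it as Lemma 2.4 of \cite{Nayak2016} --- so the comparison below is with the standard argument-principle proof that your proposal is evidently aiming at. Your setup (an invariant core curve $\gamma\subset H$, the domain $D=B(\gamma)\supset B(H)$, injectivity of $f|_H$, and the identity $N(c)-P=n(f\circ\gamma,c)$) is exactly the right machinery. The gap is the orientation claim: from the fact that $f\colon H\to H_1$ has degree $1$ you may conclude only that $|n(f\circ\gamma,c)|=1$ for $c\in B(\gamma_1)$, not that the winding number is $+1$. The sign is $+1$ precisely when $f$ carries the inner end of the annulus $H$ to the inner end of $H_1$, and this can fail: a conformal bijection of annuli may swap the two ends. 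Indeed it \emph{must} fail for some ring in every cycle. By Lemma~\ref{basiclemma} and Remark~\ref{rem-basiclemma}, some $\gamma_j=f^j(\gamma)$ surrounds a pole, so for the ring $H_j$ one has $P\geq 1$ in the corresponding domain; your argument, applied verbatim to $H_j$ in place of $H$, concludes $P=0$ there, contradicting a true statement of the paper. What actually happens for such a ring is $P=1$ together with $n(f\circ\gamma_j,c)=-1$ on $B(f(\gamma_{j}))\ni O_f$, which is perfectly consistent with $N(o)=0$ for the omitted value $o$; your step ``$P\geq1$ forces $N(c)\geq1$ for all $c\notin\gamma_1$'' silently uses the unproved positivity of the winding number.

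The repair is short and keeps your strategy. Since $o\in O_f$ satisfies $N(o)=0$ and $n(f\circ\gamma,o)\in\{0,\epsilon\}$ with $\epsilon=\pm1$ the common winding number on $B(\gamma_1)$, the identity gives $P=-n(f\circ\gamma,o)\leq 1$. If $P=0$, then $N(c)=\epsilon\geq 0$ on $B(\gamma_1)$ forces $\epsilon=1$, so $N\equiv 1$ on $B(\gamma_1)$ and $N\equiv 0$ outside, and $f\colon D\to B(\gamma_1)$ is univalent. If $P=1$, then necessarily $\epsilon=-1$ and $o\in B(\gamma_1)$, so $N\equiv 0$ on $B(\gamma_1)$, $N\equiv 1$ on the unbounded complementary component, and the single pole is simple; hence $f\colon D\to\widehat{\mathbb{C}}\setminus\overline{B(\gamma_1)}$ is univalent. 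In either case $f$ is one-one on $D\supset B(H)$, which is the assertion of the lemma (and the second case is the one that produces the pole and the surrounding of $O_f$ exploited throughout the paper). As a minor point, your choice of $r'$ to keep $f(\gamma(r'))$ off $O_f$ is unnecessary: $f(\gamma)\subset f(\mathbb{C})$ is automatically disjoint from the omitted values.
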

 It follows from the above lemma that, if all the poles of a function belonging to $M_o$ are multiple, then it has no Herman ring (Corollary $2.6$, \cite{Nayak2016}). Note that $f$ is not one-one in the plane even though it is so in $B(H)$ for every Herman ring $H$.

If a ring $H_i$ surrounds a pole then consider a non-contractible $f^p$-invariant Jordan curve $\gamma_i$ in $H_i$. Now, $\gamma_i$ surrounds the pole and it follows from the last part of Lemma~\ref{basiclemma} that $f(\gamma_i)$ surround $O_f$. In other words, $H_{i+1}$ surrounds $O_f$. The nest containing $H_{i+1}$ is too important to have a name.
 \begin{defn}{\bf (Basic nest)}
 Given a Herman ring $H$, the $H$-maximal nest surrounding the set of all omitted values of $f$ is called the basic nest of $H$. A nest different from the basic nest is called non-basic.
 \end{defn}
Here is a useful remark.
\begin{rem}
If a ring of a cycle of Herman rings surrounds a pole then its image surrounds $O_f$ and therefore the image is in the basic nest. In other words, the periodic pre-image of each ring of a non-basic nest does not surround any pole. 
\label{rem-basicnest}
\end{rem}

We need the idea of innermost rings for making some new definitions.

\begin{defn}{\bf{(Innermost ring with respect to a set)}}
Given a Herman ring $H$, we say a ring $H_j$ is innermost with respect to a set $S$ if $H_j$ surrounds $S$ but not $H_i$  for any $i, i \neq j$.
\end{defn}
A ring is said to be innermost in a nest if it does not surround any other ring of the nest.
Existence of more than one innermost ring in a nest cannot be ruled out. 
\begin{defn}
 {\bf{   (Basic chain and Basic rings})}
Given a Herman ring $H$, the ordered set of rings $\{H_1, H_2, H_3,\dots, H_n\}$ is called the basic chain, where $H_1$ is the innermost ring with respect to $O_f$ and $n$ is the smallest natural number such that $H_{n}$ surrounds a pole $w$. Each ring $H_i,~ 1 \leq i \leq n$ is said to be a basic ring of $H$ and $w$ is said to be the pole corresponding to the basic chain.
\end{defn}
Now onwards, we reserve $H_1$ to denote the innermost ring with respect to $O_f$. 
 The ring $H_1$ does not surround any pole by Remark $2.10$ of ~\cite{small}. Thus, in view of Remark \ref{rem-basiclemma}, the number of basic rings is at least $2$. Here is a useful remark on $H_1$ following from the periodicity of Herman rings.
 \begin{rem}
 For each ring $H'$ in a non-basic nest, there is a $j$ such that $H_{1+j} =H'$. 
 \label{basic-nonbasicnest}
 \end{rem}
Instead of starting from the innermost ring with respect to $O_f$ as is done in the definition of the basic chain, one can start from any ring $H_r$  surrounding $O_f$ ( but not any pole ) and look at the smallest $m$ for which $H_{r+m}$ surrounds a pole. This gives rise to the following definition.
\begin{defn}
 {\bf{   (Chain})}
 The ordered set of rings $C= \{H_r, H_{r+1},\dots, H_{r+m}\}$ is called a chain if $H_r$ is a ring surrounding $O_f$ but not any pole, and $m$ is the smallest natural number such that $H_{r+m}$ surrounds a pole.  The number  of rings in a chain $C$ is called its length, and we denote it by $|C|$.
\end{defn}

It is clear that two chains are same or disjoint. Note that the basic chain is the unique chain whose first ring is $H_1$. It is of course a basic ring. 
Further, the first ring of every chain belongs to the basic nest and the length of every chain is at least two. It is important to note that the last ring of a chain $C$ surrounds a pole, say $w$. We say $C$ corresponds to $w$.  Though the possibility of two chains corresponding to the same pole cannot be ruled out, chains corresponding to different poles are important for our purpose.
\begin{defn}{\bf (Independent chains)}
Two chains are called independent if they correspond to two different poles.
\end{defn}
 Here are two basic observations on the length of chains.
  
  \begin{lem}\
  \begin{enumerate}
  \item The length of every chain is less than or equal to that of the basic chain.
  \item If $C_i$ and $C_j$ are two independent chains then their lengths are different.
  \end{enumerate}
  \label{basicchain-properties}
  \end{lem}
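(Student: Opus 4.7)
Both parts rest on a single \emph{nesting propagation} observation: if $H_a$ surrounds $H_b$ and $H_a$ surrounds no pole, then $H_{a+1}$ surrounds $H_{b+1}$. To establish this, I would use Lemma~\ref{one-one} to deduce that $f$ is holomorphic and one-one on $B(H_a)$, then invoke the rotation property of $f^p$ on the cycle to force $f|_{H_a}:H_a\to H_{a+1}$ to be a biholomorphism, and finally combine a local orientation argument at the inner boundary of $H_a$ with the boundedness of $f(B(H_a))$ (no pole in $B(H_a)$) to conclude $f(B(H_a))=B(H_{a+1})$. Once this is in place, $H_b\subset B(H_a)$ gives $f(H_b)\subset B(H_{a+1})$; since $f(H_b)\subset H_{b+1}$ and $H_{b+1}\cap H_{a+1}=\emptyset$, connectedness of $H_{b+1}$ forces $H_{b+1}\subset B(H_{a+1})$. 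Verifying $f(B(H_a))=B(H_{a+1})$, thereby excluding the alternative in which the inner boundary of $H_a$ is sent to the outer boundary of $H_{a+1}$, is the main technical obstacle.

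\textbf{Part (1).} Let $C=\{H_r,H_{r+1},\dots,H_{r+m}\}$ be an arbitrary chain. If $H_r=H_1$ then $C$ is the basic chain and $m+1=n$. Otherwise $H_r$ and $H_1$ are distinct rings both surrounding $O_f$; since two disjoint rings sharing a point in their bounded complements must be nested, and $H_1$ is innermost with respect to $O_f$ (so it surrounds no other ring of the cycle), $H_r$ must surround $H_1$. By the definition of a chain, $H_{r+i}$ surrounds no pole for $i=0,1,\dots,m-1$, so the propagation step applies $m$ times and yields that $H_{r+i}$ surrounds $H_{1+i}$ for every $i\le m$. If $m\ge n$, then taking $i=n-1\le m-1$ shows $H_{r+n-1}$ surrounds $H_n$; since $H_n$ surrounds a pole, so does $H_{r+n-1}$, contradicting the fact that $H_{r+n-1}$ lies in the chain strictly before $H_{r+m}$. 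Hence $m+1\le n$.

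\textbf{Part (2).} Assume two independent chains $C_i,C_j$ corresponding to distinct poles $w_i\neq w_j$ share a common length $m+1$. Their first rings $H_{r_i},H_{r_j}$ both surround $O_f$. If $H_{r_i}=H_{r_j}$, the chains start at the same ring and have the same length, so they share the last ring and therefore the pole, against independence. Otherwise these are two distinct rings both surrounding $O_f$, forcing one to surround the other; say $H_{r_i}$ surrounds $H_{r_j}$. Applying the propagation step $m$ times (valid since neither chain has a pole-surrounding ring before its last) gives that $H_{r_i+m}$ surrounds $H_{r_j+m}$, and hence surrounds $w_j$. But $H_{r_i+m}$ already surrounds $w_i$ by the chain definition, so $B(H_{r_i+m})$ contains two distinct poles, both mapped to $\infty$ by $f$, contradicting the injectivity of $f$ on $B(H_{r_i+m})$ guaranteed by Lemma~\ref{one-one}.
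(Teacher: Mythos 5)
Your proposal is correct and follows essentially the same route as the paper: both parts hinge on propagating the nesting relation forward under $f$ (justified via holomorphy of $f$ on the pole-free set $B(H_a)$ together with the Maximum Modulus Principle / Lemma~\ref{one-one}), with Part (1) comparing an arbitrary chain against the basic chain starting from the fact that its first ring surrounds $H_1$, and Part (2) deriving a contradiction from one ring surrounding two distinct poles. The only difference is cosmetic: you spell out the propagation step that the paper compresses into a one-line appeal to the Maximum Modulus Principle, and you phrase the Part (2) contradiction directly through injectivity of $f$ on $B(H_{r_i+m})$ rather than through the paper's "each nest surrounds at most one pole," which is the same application of Lemma~\ref{one-one}.
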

  \begin{proof}
   \begin{enumerate}
   \item  Let $C=\{ H_{r+1}, H_{r+2}, \cdots, H_{r+n} \}$ be a chain different from the basic chain. Then  $H_{r+i}$ does not surround any pole for $i=1, 2,\dots, {n-1}$. Since $C$ is different from the basic chain, $H_{r+1}$ surrounds $H_1$, the innermost ring with respect to $O_f$. Since $H_1$ is the first ring of the basic chain, it follows from the Maximum Modulus Principle that $H_{r+i}$ surrounds $H_{i}$ for each $i = 1, 2, \cdots, n $.
  The pole corresponding to the basic chain is surrounded by either $H_{n}$ or $H_{k}$ for some $k >n$. This gives that the length of every chain is less than or equal to that of the basic chain.
   
   \item  Let $C_{i}$ and $ C_{j}$ be two independent chains. By definition of independent chains, the poles $w_i$ and $w_j$ corresponding to $C_{i}$ and $C_{j}$ respectively are different. Let $H_{i+1}$ and $H_{j+1}$ be the initial rings of $C_{i}$ and $C_{j}$ respectively. Then both of these surround $O_f$ and hence, either $H_{i+1}\subseteq B(H_{j+1})$ or  $H_{j+1}\subseteq B(H_{i+1})$. Without loss of generality, let $ H_{i+1}\subseteq B(H_{j+1})$. If the length of $C_i$ and $C_j$ is the same, say $l $, then $H_{j+l}$ surrounds $H_{i+l}$ which gives that both are contained in the same nest. Also $H_{i+l}$ and $H_{j+l }$ surround the poles $w_i$ and $w_j$ respectively. But each nest surrounds at most one pole (by Lemma~\ref{one-one}) giving that $w_i=w_j$ which is a contradiction. This proves that the length of $C_i$ is different from that of $C_j$.
   \end{enumerate}
  \end{proof}
  
\section{Results and their proofs}

The basic chain, introduced in the previous section is going to play a key role in the proofs. To start with, we make an observation on how it restricts  the number of nests in a cycle of Herman rings.

\begin{lem}
Let $H$ be a $p$-periodic Herman ring of $f$. Then the number of nests in the cycle of $H$ is at most the length of the basic chain.
\label{nests-lengthofbs}
\end{lem}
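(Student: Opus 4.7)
My plan is to show that every Herman ring in the cycle of $H$ lies in the nest containing $H_i$ for some $i$ with $1 \leq i \leq n$, where $n$ denotes the length of the basic chain $\{H_1, H_2, \ldots, H_n\}$; since there are at most $n$ such nests, the required bound follows immediately.

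The first ingredient is a structural fact already implicit in the proof of Lemma \ref{basicchain-properties}. Given an arbitrary chain $C = \{H_s, H_{s+1}, \ldots, H_{s+m}\}$, the starting ring $H_s$ surrounds $H_1$, because both surround $O_f$ and $H_1$ is innermost with respect to $O_f$. Iterating the Maximum Modulus Principle, which is valid because $H_{s+i}$ does not surround a pole for $0 \leq i \leq m-1$, one obtains that $H_{s+i}$ surrounds $H_{i+1}$ for each $i = 0, 1, \ldots, m$. Consequently $H_{s+i}$ and $H_{i+1}$ lie in the same nest. Combining this with the bound $m+1 = |C| \leq n$ from Lemma \ref{basicchain-properties}(1), every ring appearing in any chain is contained in one of the $n$ nests of $H_1, H_2, \ldots, H_n$.

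The remaining task, which I view as the main hurdle, is to verify that every ring $H_j$ in the cycle either belongs to some chain or lies in the basic nest (the latter being the nest of $H_1$). If $H_j$ surrounds $O_f$, then since all rings surrounding $O_f$ are linearly nested around $H_1$, it must lie in the basic nest and nothing further is needed. Otherwise I trace backward through the cyclic indices to locate the nearest index $r$ preceding $j$ for which $H_r$ surrounds $O_f$; such an $r$ exists because $H_1$ lies in the cycle. The key sub-observation is that this $H_r$ cannot simultaneously surround a pole: otherwise Lemma \ref{basiclemma} would force $H_{r+1}$ to surround $O_f$, contradicting the maximality of $r$ (the boundary case $r+1 = j$ being excluded because $H_j$ does not surround $O_f$). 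Hence $H_r$ initiates a genuine chain $C_r$, and a parallel argument rules out any pole-surrounding ring strictly between $H_r$ and $H_j$, thereby placing $H_j$ in $C_r$. Once this backward cyclic analysis is carried out, piecing the two ingredients together shows that every ring in the cycle lies in one of the $n$ nests of $H_1, \ldots, H_n$, and so the number of nests is at most $n$.
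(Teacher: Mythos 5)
Your proof is correct, but it reaches the bound by a route dual to the one in the paper. The paper fixes a non-basic nest $N$, takes the smallest $n$ with $H_n\in N$, and shows by contradiction (built around the largest index $k^*<n$ at which a pole is surrounded, followed by the same Maximum Modulus iteration you use) that $H_n$ must be a basic ring; since distinct nests cannot share a ring, this injects the set of nests into the set of basic rings. You instead cover the whole cycle: every ring either surrounds $O_f$ and hence sits in the basic nest, or belongs to the chain initiated by the last preceding ring surrounding $O_f$, and each chain rides alongside the basic chain (via Lemma \ref{basicchain-properties}(1), Remark \ref{rem-basicnest} and the Maximum Modulus Principle) so that its $k$-th ring lies in the nest of the $k$-th basic ring. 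Thus the basic rings surject onto the nests. The technical engine is identical in both arguments, but your explicit decomposition of the cycle into the basic nest plus chains is only implicit in the paper --- it resurfaces in the proof of Theorem \ref{independentchain} as the observation $(*)$ --- and it packages the counting somewhat more transparently. Two small points to tighten: write ``surrounds or equals'' throughout, since the chain under consideration may be the basic chain itself, in which case $H_{s+i}=H_{i+1}$; and note explicitly that a ring surrounding $O_f$ but no pole really does initiate a chain, i.e.\ that some forward image eventually surrounds a pole, which is Remark \ref{rem-basiclemma}.
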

\begin{proof}
We first show that each nest contains at least one basic ring. This is clearly true for the basic nest. Now let $N$ be a non-basic nest. Recall that $H_1$ is the innermost ring with respect to $O_f$ and it does not surround any pole. The ring $H_1$ is not in $N$ and one of its iterated forward image is in $N$ by Remark~\ref{basic-nonbasicnest}.
 Let $n$ be the smallest natural number such that $H_{n}$ is in $N$. We assert that $H_n$ is a basic ring.
 
 The ring $H_{n-1}$, the periodic pre-image of $H_n$, does not surround any pole by Remark~\ref{rem-basicnest}. If $H_{k}$ surrounds a pole for some $k, ~1 < k < n$ then consider the largest such $k$ and denote it by $k^*$. As observed in the previous paragraph, $k^* \neq n-1$.  Therefore $2 \leq k^* \leq n-2$. It follows from  Remark \ref{rem-basicnest} that $H_{k^*+1}$ surrounds $O_f$ and hence, is in the basic nest. Further, none of $H_{k^*+1}, H_{k^*+2},\dots, H_{k^*+n-k^*-1}=H_{n-1} $ surrounds any pole by the choice of $k^*$. Since $H_{1}$ is the innermost ring with respect to $O_f$, either $H_{k^*+1}$ surrounds $H_1$ or is equal to $H_1$. It follows from Lemma~\ref{one-one} and the Maximum Modulus Principle that $ H_{k^*+j}$ surrounds or is equal to $H_{j}$ for each $j,~ 1 \leq j \leq n-k^*$. Further, the map 
    $f^{n-k^*-1}: B(H_{k^*+1}) \to B(H_n)$ is conformal. This gives that $H_{n-k^*}$ is a  ring in the nest $N$. However, this contradicts our earlier assumption that $n $ is the smallest natural number such that $H_n$ is in $N$.  This proves that $H_{k}$ does not surround any pole for any  $k,~ 1 < k < n $ and hence $ H_{n}$ is a basic ring.  
     \par   
    
     Since two different nests cannot contain the same basic ring, the number of nests is at most the number of basic rings. The proof is completed by noting that the number of basic rings is nothing but the length of the basic chain.

\end{proof}
  The number of $H$-relevant poles is at least two by Lemma 2.11 of \cite{small}. An upper bound for this number can be obtained using the previous lemma.

\begin{lem}
    Let $H$ be a Herman ring of $f$. Then the number of $H$- relevant poles is at most the length of the basic chain.
\label{poles-lengthofbc}
\end{lem}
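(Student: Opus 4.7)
The plan is to combine two facts already established in the paper, so that the proof is almost immediate. First, I would recall that for any $H$-relevant pole $w$, by definition there is some ring $H_i$ of the cycle containing $H$ with $w\in B(H_i)$. Each $H_i$ belongs to exactly one $H$-maximal nest, hence $w$ is surrounded by that unique nest. This gives a well-defined map from the set of $H$-relevant poles to the set of $H$-maximal nests, sending each $H$-relevant pole to the nest whose rings surround it.

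Next, I would argue that this map is injective. Its injectivity is equivalent to the statement that each nest surrounds at most one pole. This is exactly the consequence of Lemma~\ref{one-one} noted in Section 2: if two distinct poles were surrounded by two rings of the same nest, then both rings are nested inside the outermost ring $H^{*}$ of that nest, so both poles lie in $B(H^{*})$, contradicting the injectivity of $f$ on $B(H^{*})$. Hence the map is injective, and consequently the number of $H$-relevant poles is bounded above by the number of nests in the cycle of $H$.

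Finally, I would invoke Lemma~\ref{nests-lengthofbs}, which asserts that the number of nests in the cycle is at most the length of the basic chain. Chaining the two inequalities yields that the number of $H$-relevant poles is at most the length of the basic chain, which is the desired conclusion.

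There is essentially no obstacle here: all the genuine work (relating nests to basic rings, and the one-one property of $f$ on $B(H)$) has been done in Lemmas~\ref{one-one} and~\ref{nests-lengthofbs}. The only thing to be careful about is making clear that an $H$-relevant pole belongs to a unique nest (so that the counting argument is valid), and that ``nest surrounds a pole'' really means the pole lies in $B$ of the outermost ring of the nest, consistent with the definition given earlier.
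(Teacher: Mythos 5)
Your proposal is correct and follows essentially the same route as the paper: both bound the number of $H$-relevant poles by the number of nests via Lemma~\ref{one-one} (each nest surrounds at most one pole) and then apply Lemma~\ref{nests-lengthofbs} to bound the number of nests by the length of the basic chain. Your version merely makes the counting argument slightly more explicit by phrasing it as an injective map from poles to nests.
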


\begin{proof}

 It follows from Lemma \ref{nests-lengthofbs} that the total number of nests in the cycle of Herman rings is less than or equal to the length of the basic chain. Further, each nest surrounds at most one $H$-relevant pole by Lemma~\ref{one-one}. This gives that the number of $H$-relevant poles is at most the number of nests. Hence the number of $H$-relevant poles is at most the length of the basic chain.
\end{proof}
\begin{rem} 
Let $h, n$ and $l$ denote the number of $H$-relevant poles, the number of nests and the length of the basic chain corresponding to a cycle of Herman rings respectively. Then it follows from the proof of the above lemma that $h \leq n \leq l$.  If $h = l$ then $h = n = l$. It is evident from the proof of Lemma~\ref{nests-lengthofbs} that each nest contains at least one basic ring. Therefore, each nest contains exactly one basic ring in this case. Also each nest contains an $H$-relevant pole. As evident from  Lemma $3.1$ of \cite{small}, this is the case if the period of the Herman ring is three.\label{hrp-n-lbc}
\end{rem}
\begin{rem}
If there is a $4$-periodic Herman ring, it is seen (Lemma~3.2,  ~\cite{small}) that the length of the basic chain is always three whereas the number of $H$-relevant poles is always two.
Further, the number of nests can be two or three.
\end{rem} 

Note that two different chains do not contain a common ring.  Since the number of all rings in a cycle of Herman rings is the period of the cycle, the lengths of chains are crucial. Next result determines the number of independent chains and their lengths in terms of the number of $H$-relevant poles. Recall that any two independent chains have different lengths by Lemma \ref{basicchain-properties}$(2)$. 

\begin{thm}
    Let $H$ be a $p$-periodic Herman ring of $f$ and $h$ be the number of $H$-relevant poles. Then the number of independent chains is  $h-1$ or $h$. If the basic nest does not surround any pole then the number of independent chains is $h$.
    If $  c \in \{h-1, h \}$, and $C_2, C_3, \dots, C_{c+1}$  are the independent chains such that $|C_2| < |C_3|<  \dots <|C_{c+1}| $  then $|C_j|\geq j$ for all $j, ~2 \leq j \leq c+1$.
\label{independentchain}
\end{thm}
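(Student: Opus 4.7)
The plan is to classify the rings in the cycle by two properties---whether they lie in the basic nest (equivalently, surround $O_f$) and whether they surround a pole---and then enumerate chains through this classification. Call a ring \emph{type B} if it lies in the basic nest but surrounds no pole, and \emph{type C} if it surrounds a pole but does not lie in the basic nest. By definition every chain starts at a type B ring; I will show that every chain ends at a type C ring, yielding a bijection between chains and type C rings.

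The central structural step is the claim that if $H_j$ surrounds $O_f$ but no pole, then $H_{j+1}$ does not surround $O_f$. Since $B(H_j)$ contains no pole, $f$ is holomorphic on $\overline{B(H_j)}$ and injective there by Lemma~\ref{one-one}; a compactness and boundary-orientation argument (the image is bounded, so $f(\partial B(H_j))$ must be the inner boundary of $H_{j+1}$) forces $f(B(H_j)) = B(H_{j+1})$. Since $f$ omits every point of $O_f$, the image $B(H_{j+1})$ contains no omitted value, so $H_{j+1}$ is not in the basic nest. Iterating this fact along a chain $\{H_r,\ldots,H_{r+m}\}$, every intermediate ring lies outside the basic nest, and the chain-end $H_{r+m}$ is type C. Conversely, starting from any type C ring $H_k$ and tracing backwards, the predecessor cannot surround a pole (otherwise its image would surround $O_f$, contradicting that $H_k$ is not in the basic nest), so the trace-back reaches a type B ring in finitely many steps since $H_1$ is type B. This yields the bijection between chains and type C rings.

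To count independent chains, first observe that by Lemma~\ref{one-one} each nest surrounds at most one pole, and two distinct nests surrounding a common pole would force one to be contained in the bounded component of the other, contradicting that each is a maximal nest with an outermost ring. Hence the $h$ relevant poles correspond bijectively to the $h$ nests that surround some pole. Two chains have their ends in the same nest iff they correspond to the same pole, and every non-basic nest surrounding a pole contains a type C ring (its outermost ring), so the number of independent chains equals the number of non-basic nests surrounding a pole. This count is $h$ if the basic nest surrounds no pole, and $h-1$ if it does, which establishes the first two assertions.

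The length bound is a short induction. Every chain has length at least two (as noted following the chain definition), so $|C_2| \geq 2$; and since independent chains have pairwise distinct integer lengths by Lemma~\ref{basicchain-properties}$(2)$, the ordering $|C_j| < |C_{j+1}|$ forces $|C_{j+1}| \geq |C_j| + 1 \geq j+1$ by induction. I expect the main obstacle to be the orientation and boundedness argument in the second paragraph that pins down $f(B(H_j)) = B(H_{j+1})$ (rather than the unbounded complementary component of $H_{j+1}$); once this is established, the remaining steps are structural bookkeeping driven by Lemma~\ref{one-one} and the fact that $f$ omits $O_f$.
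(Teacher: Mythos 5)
Your backward-trace construction of a chain ending at each pole-surrounding ring of a non-basic nest is essentially the paper's argument $(*)$, and your induction for $|C_j|\ge j$ matches the paper's pigeonhole step. But there is a genuine error in the classification that drives your count. You assert that ``lies in the basic nest'' is equivalent to ``surrounds $O_f$''; it is not. The basic nest is the maximal nest whose \emph{outermost} ring surrounds $O_f$, and it may contain a ring that surrounds a pole $w$ without surrounding $O_f$. Your central structural step (via Lemma~\ref{one-one} and the fact that $f$ omits $O_f$) only shows that the last ring of a chain fails to surround $O_f$; it does not show that this ring lies outside the basic nest. In fact the opposite can happen: if the last ring $H_k$ of a chain surrounds the pole $w$ that the basic nest surrounds, then $H_k$ and the outermost ring $R$ of the basic nest both surround $w$, so one surrounds the other; $H_k$ cannot surround $R$ (it would then surround $O_f$), hence $H_k$ lies \emph{inside} the basic nest and is not ``type C.'' So the claimed bijection between chains and type C rings fails, and with it your exact dichotomy ``$h$ if the basic nest surrounds no pole, $h-1$ if it does.''

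That dichotomy is in fact false as stated: the paper's Remark~\ref{basicnest-pole}(1) allows $h$ independent chains even when the basic nest surrounds a pole, and Remark~\ref{basicnest-pole}(2) needs the extra hypothesis that every ring of the basic nest surrounding $w$ also surrounds $O_f$ in order to force the count down to $h-1$ (this is precisely where Theorem~\ref{lowerbound-equality} does additional work). The theorem itself is still recoverable from the correct pieces of your argument: the backward trace gives at least one chain per non-basic nest surrounding a pole (so at least $h-1$, and exactly at least $h$ when the basic nest surrounds no pole), while distinct independent chains correspond to distinct $H$-relevant poles, giving the upper bound $h$. You should replace the bijection claim by this two-sided bound and delete the false ``exactly $h-1$'' assertion.
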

\begin{proof}
We first show that for each non-basic nest surrounding a pole, there is a chain corresponding to that pole.
Let $N$ be a non-basic nest surrounding a pole. 
This means that a ring, say $H_r$ belonging to $N$ surrounds a pole $w$ of $f$.
 Such a nest exists as there are at least two $H$-relevant poles. 
The ring $H_{r-1}$, the periodic pre-image of $H_r$ does not  surround any pole by Remark~\ref{rem-basicnest}.
  Since $H$ is periodic, there is an $m$ such that $H_{r-m}$, the periodic pre-image of $H_r$ under $f^{m}$, surrounds a pole. 
  Choose the smallest such $m$ and observe that $m  \geq 2$. 
  Then $H_{r-m+1}$ does not surround any pole. Further, the ring $H_{r-m+1}$ is in the basic nest by Remark~\ref{rem-basicnest}. 
  Thus $\{H_{r-m+1}, H_{r-m+2}, \dots , H_{r} \}$ is a chain.
 Therefore, 
 
 for each non-basic nest surrounding a pole $w$, there is a chain corresponding to $w$.  \hspace{11.5cm}                     $(*)$ 
 
 Each $H$-relevant pole, except possibly one is surrounded by a non-basic nest. In other words, the number of non-basic nests surrounding some pole is either $h-1$ or $h$. It follows from $(*)$ that the number of independent chains is  either $h-1$ or $h$. If the basic nest does not surround any pole then the number of different non-basic nests surrounding some pole is $h$. This is nothing but the number of independent chains. 
  \par   
Since lengths of two different independent chains are different (Lemma~\ref{basicchain-properties}), the chains can be ordered according to their lengths. Let $|C_2| < |C_3|< \dots <|C_{c+1}|$. Suppose that $|C_j|<j$ for some $j,~ 2 \leq j \leq c+1$.  Note that the length of each chain is at least two. In particular $|C_2| \geq 2$, which gives that $2 \leq |C_2| < |C_3|< \dots <|C_{j}| \leq j-1$.  However, this is not possible by the Pegionhole Principle. Therefore, $|C_j|\geq j$, for all $j=2~, 3, \dots, ~c+1$.
     
\end{proof}

The situation when the basic nest does not surround any pole is dealt with in Theorem \ref{independentchain}. Following is a remark on the other case.
\begin{rem}
\begin{enumerate}
\item If the basic nest surrounds a pole $w$ then the number of independent chains is $h$ or $h-1$.

\item
Let $w$ be a pole surrounded by the basic nest. If each ring of the basic nest surrounding $w$ also surrounds $O_f$, then the number of independent chains is $h-1$. To prove it, note that there are $h-1$ independent chains corresponding to each $H$-relevant pole surrounded by a non-basic nest (it follows from $(*)$). In order to show that those are the only independent chains, suppose on the contrary that there is a chain $\{ H_r,  H_{r+1}, \dots, H_{k-1}, H_k \}$ corresponding to $w$. Then $H_k$ surrounds $w$ as well as $O_f$. Since $H_{k-1}$ does not surround any pole (by definition of chain), $f:B(H_{k-1}) \to B(H_k)$ is conformal. This is a contradiction as $B(H_k)$ contains $O_f$ and $B(H_{k-1})$ is bounded (See Lemma 2.1, \cite{tnayak-2015}). Hence there is no chain corresponding to $w$. Therefore, the number of independent chains is exactly $h-1$  whenever each ring surrounding $w$ also surrounds $O_f$. 
\end{enumerate}
\label{basicnest-pole}
\end{rem}
We now prove the first main result of this article.

\begin{thm}{\bf{(Lower bound for the period of Herman ring)}}
    Let $H$ be a $p$-periodic Herman ring  of a function $f$ and $h$ be the number of $H$-relevant poles. Then $p \geq \frac{h(h+1)}{2}$. In particular, the following are true.
\begin{enumerate}
\item If the basic nest surrounds a pole then $p\geq \frac{h(h+1)}{2}$.
\item If the basic nest does not surround any pole then $p\geq \frac{h (h+3)}{2}$.
\end{enumerate}
\label{lowerbound}
\end{thm}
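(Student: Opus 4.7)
The plan is to read the lower bound directly from Theorem~\ref{independentchain}. Since $p$ equals the total number of rings in the cycle and distinct chains of the cycle are pairwise disjoint, any family of independent chains $C_2,C_3,\ldots,C_{c+1}$ satisfying the length bound $|C_j|\geq j$ yields $p\geq\sum_{j=2}^{c+1}|C_j|\geq\sum_{j=2}^{c+1}j$. My strategy is to apply this to both of the stated cases, and, in the single situation where this count is one short, to locate an additional ring of the cycle lying outside every chain.

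For part $(2)$, where the basic nest contains no pole, Theorem~\ref{independentchain} provides $c=h$ independent chains, so
\[
p \;\geq\; \sum_{j=2}^{h+1}|C_j| \;\geq\; \sum_{j=2}^{h+1}j \;=\; \frac{(h+1)(h+2)}{2}-1 \;=\; \frac{h(h+3)}{2}.
\]
For part $(1)$, let $w$ denote the pole surrounded by the basic nest. When $c=h$ the same computation already gives $p\geq\frac{h(h+3)}{2}\geq\frac{h(h+1)}{2}$, so the delicate subcase is $c=h-1$. Here the $h-1$ independent chains supply only $\sum_{j=2}^{h}j=\frac{h(h+1)}{2}-1$ rings, one short of the claimed bound.

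To produce the missing ring in this subcase, I would use the outermost ring $\widetilde H$ of the basic nest. By definition $\widetilde H$ surrounds $O_f$, and by the hypothesis of part $(1)$ it surrounds $w$. Since $c=h-1$, the $h-1$ independent chains necessarily correspond to the $h-1$ $H$-relevant poles lying in non-basic nests, so no chain of the cycle corresponds to $w$. Consequently $\widetilde H$ cannot be the starting ring of any chain (starts surround no pole), cannot be an intermediate ring (intermediate rings surround no pole), and cannot be an ending ring (an end surrounding $w$ would produce a chain corresponding to $w$). Hence $\widetilde H$ lies outside every chain, and including it gives $p\geq\frac{h(h+1)}{2}$, completing part $(1)$. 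The main obstacle is this final step, and in particular the assertion that no chain corresponds to $w$ when $c=h-1$; this rests on the structural fact, implicit in Remark~\ref{basicnest-pole}, that the image of the penultimate ring of a chain cannot contain $O_f$, so the end of a chain cannot simultaneously surround $O_f$ and a pole.
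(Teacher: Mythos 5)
Your proposal is correct and follows essentially the same route as the paper: both cases are read off from Theorem~\ref{independentchain}, and in the delicate subcase $c=h-1$ of part (1) the paper supplies the missing ring in exactly the same way, by observing that the ring of the basic nest surrounding $w$ cannot belong to any chain, since a chain ending at it would correspond to $w$ and be independent of the other $h-1$ chains. Your closing appeal to the structural fact behind Remark~\ref{basicnest-pole} is not actually needed (the counting argument you already gave establishes that no chain corresponds to $w$), but it does no harm since the outermost ring of the basic nest does surround $O_f$.
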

\begin{proof}
Note that two rings belonging to two different independent chains are different. 
\begin{enumerate}
\item 
It follows from Remark \ref{basicnest-pole}(1) that the number of independent chains is either $h$ or $h-1$. We deal with these cases seperately.
\par Let the number of independent chains be $h-1$. Note that the basic nest surrounds a pole and therefore, there are $h-1$ many $H$-relevant poles surrounded by non-basic nests. It follows from $(*)$ that there are $h-1$ chains, each corresponding to one such pole. Clearly, these chains are independent and are the only independent chains. Let the $h-1$ chains be denoted by $C_2,~ C_3,~ \dots, ~C_{h}$. The total number of rings contained in all these independent chains is $|C_2|+|C_3|+\cdots+|C_{h}|$, which is at least $  2+3+\cdots+h = \frac{h(h+1)}{2}-1$ by Theorem \ref{independentchain}. Now, the ring in the basic nest surrounding a pole can not be a part of any chain, because this chain along with the previously found $h-1$ chains would be independent contradicting our assumption. Therefore, the total number of rings in the cycle containing $H$ is at least $1$ more than $\frac{h(h+1)}{2}-1$. Thus $p \geq \frac{h(h+1)}{2} $.

\par

If the number of independent chains is $h$ then the total number of rings contained in all these chains is $|C_2|+|C_3|+\cdots+|C_{h+1}|$
. This number is at least $2+3+\cdots+(h+1)=\frac{h(h+3)}{2}$ by Theorem~\ref{independentchain}. Therefore, $p \geq \frac{h(h+3)}{2}$ which is clearly bigger than  $\frac{h(h+1)}{2}$.

\item If the basic nest does not surround any pole, then there exists $h$ number of independent chains by $(*)$. Arguing as in the previous paragraph, it follows that the total number of rings in the cycle of $H$ is greater than or equal to
     $ |C_2|+|C_3|+\cdots+|C_{h+1}|\geq 2+3+\cdots+(h+1)=\frac{h(h+3)}{2} $. Thus $p\geq \frac{h(h+3)}{2}$.
\end{enumerate}
\end{proof}
An additional assumption leads to equality in Theorem~\ref{lowerbound}. The assumption is satisfied for $3$-periodic Herman rings \cite{small}. Recall that $h$ denotes the number of $H$-relevant poles.
\begin{thm}
Let $H$ be a $p$-periodic Herman ring and each ring surrounding an $H$-relevant pole be the outermost ring of the concerned nest. 
\begin{enumerate}
\item If the length of the basic chain is equal to $h$ then $p=\frac{h (h+1)}{2}$.
\item If the length of the basic chain is equal to $h+1$ and the basic nest does not surround any pole then $p=\frac{h (h+3)}{2}$.
\end{enumerate}
\label{lowerbound-equality}
\end{thm}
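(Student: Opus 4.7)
The plan is to combine Theorem~\ref{lowerbound} with matching upper bounds, obtained by showing that under the extra hypothesis every ring of the cycle lies in some chain (with at most one exception in case~(1)) and that the independent chain lengths are pinned down exactly.

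First I would read off the nest structure. In case~(1), $l = h$ together with $h \le n \le l$ (Lemma~\ref{nests-lengthofbs} and Remark~\ref{hrp-n-lbc}) forces $n = h$, so each nest contains exactly one basic ring and, by pigeonhole with Lemma~\ref{one-one}, exactly one $H$-relevant pole; in particular the basic nest surrounds some pole $w_{0}$. The hypothesis identifies the ring of the basic nest surrounding $w_{0}$ with the outermost ring $H_{\ast}$, which automatically also surrounds $O_{f}$, so Remark~\ref{basicnest-pole}(2) yields exactly $h-1$ independent chains. In case~(2) the basic nest surrounds no pole, so the $h$ relevant poles occupy $h$ distinct non-basic nests, which forces $n = h+1$ (and still one basic ring per nest, since $l = h+1$); Theorem~\ref{independentchain} then gives exactly $h$ independent chains.

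The key structural claim is that every ring of the cycle lies in exactly one chain, with the single exception, in case~(1), of $H_{\ast}$. For an arbitrary ring $H_{i}$ that is not pole-surrounding, I iterate backward along the cycle: finiteness of the cycle together with the existence of a pole-surrounding ring forces a smallest $r \ge 0$ for which $H_{i-r-1}$ is pole-surrounding, and then by Lemma~\ref{basiclemma} $H_{i-r}$ surrounds $O_{f}$ and, being non-pole-surrounding by construction, starts a chain that contains $H_{i}$. Any pole-surrounding ring different from $H_{\ast}$ lives in a non-basic nest and is the end of a chain by the observation $(\ast)$ in the proof of Theorem~\ref{independentchain}. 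The ring $H_{\ast}$ surrounds both $O_{f}$ and the pole $w_{0}$, so it cannot start a chain (it surrounds a pole), cannot be an intermediate (intermediates surround no pole), and cannot end a chain by the conformality obstruction recorded in Remark~\ref{basicnest-pole}(2); in case~(2), $H_{\ast}$ does surround $O_{f}$ but no pole, so it is simply a starter and poses no issue.

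With this in hand the counting drops out. In case~(1) the cycle splits into the $h-1$ independent chains together with the single ring $H_{\ast}$, giving $p - 1 = \sum_{j=2}^{h} |C_{j}|$; the $h-1$ lengths are pairwise distinct by Lemma~\ref{basicchain-properties}(2), each is at least $2$ by the definition of a chain, and each is at most the basic-chain length $h$ by Lemma~\ref{basicchain-properties}(1). Since $\{2, 3, \ldots, h\}$ has exactly $h-1$ elements, the lengths must enumerate it exactly, giving $p = 1 + (2 + 3 + \cdots + h) = \frac{h(h+1)}{2}$. Case~(2) is entirely parallel: $p = \sum_{j=2}^{h+1} |C_{j}|$, the $h$ distinct lengths exhaust $\{2, 3, \ldots, h+1\}$, and $p = 2 + 3 + \cdots + (h+1) = \frac{h(h+3)}{2}$. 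The one genuinely delicate step is the claim that every ring lies in a chain; once that is in place the rest is a clean pigeonhole argument forcing the chain lengths to be consecutive integers starting at $2$, and the subtlety is handled precisely by the exclusion of $H_{\ast}$ via Remark~\ref{basicnest-pole}(2).
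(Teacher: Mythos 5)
Your proposal is correct and follows essentially the same route as the paper's own proof: both decompose the cycle into the independent chains plus (in case~(1)) the single pole-surrounding outermost ring of the basic nest, count the independent chains as $h-1$ resp.\ $h$ via Remark~\ref{basicnest-pole}(2) and Theorem~\ref{independentchain}, and force the chain lengths to be the consecutive integers $2,\dots,h$ resp.\ $2,\dots,h+1$ by pigeonhole. The one point where you go beyond the paper is the explicit backward-iteration argument showing that every ring of the cycle actually lies in some chain (apart from the exceptional ring $H_{\ast}$); the paper leaves this covering step implicit, so your version is, if anything, slightly more complete.
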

\begin{proof}
We assert that the set of all chains is the same as the set of all independent chains. Equivalently, two different chains are independent. To prove this by contradiction, suppose that $\{H_r,~H_{r+1},\cdots, ~H_k \}$ and $\{H_i,~ H_{i+1},\cdots, ~H_{k'} \}$ are two different chains and are not independent. Then they correspond to the same pole. In other words, $H_k$ and $H_{k'}$ are different rings surrounding the same pole. However, this negates our assumption that only the outermost ring of a nest surrounds a pole. 

The period of $H$ is to be determined by finding the lengths of all independent chains.

\begin{enumerate}
\item 
Since the number of $H$-relevant poles is $h$, the number of independent chains is $h-1$ or $h$, by Theorem \ref{independentchain}. As the length of the basic chain is equal to $h$ then there are exactly $h$ number of nests and each nest contains exactly one basic ring. Further, each nest surrounds an $H$-relevant pole. All these statements are observed in Remark~\ref{hrp-n-lbc}. In particular, the basic nest surrounds a pole say $w$. Since by the assumption, only the outermost ring of a nest surrounds a pole, the ring in the basic nest which surrounds $w$ also surrounds the $O_f$. Thus all the conditions of Remark~\ref{basicnest-pole}(2) are satisfied. Hence the number of independent chains is $h-1$ and there is no chain corresponding to $w$.  Let the $h-1$ chains be denoted by $C_2,~ C_3,~ \dots, ~C_{h}$. Then $2 \leq |C_2| < |C_3|< \cdots < |C_h|$ by Theorem~\ref{independentchain}. Note that $C_h$, being the longest chain is the basic chain. Further, $ |C_h|=h$ by assumption. This is possible only when $|C_j|= j$ for all $j=2, 3, \dots, h$ by the Pegionhole Principle. Thus the total number of rings in the cycle of $H$ is $ 1+2+ \cdots+h=\frac{h(h+1)}{2}$. Note that the first term $1$ in the sum corresponds to the outermost ring of the basic nest. Thus $p=\frac{h(h+1)}{2}$.
\item 
It follows from Theorem~\ref{independentchain} that the number of independent chains is $h$. Let the independent chains be denoted by $C_2,~ C_3,~ \dots, ~C_{h+1}$. Then $2 \leq |C_2| < |C_3|< \cdots < |C_{h+1}|$ by Theorem~\ref{independentchain}. Note that $C_{h+1}$, being the longest chain is the basic chain. Further, $ |C_{h+1}|=h+1$ by assumption. This is possible only when $|C_j|= j$ for all $j=2, 3, \dots, h+1$, again by the Pegionhole Principle. Thus the total number of rings in the cycle of $H$ is $ 2+3+ \cdots+(h+1)=\frac{h(h+3)}{2}$ proving that $p=\frac{h(h+3)}{2}$.

\end{enumerate}
\end{proof}
\begin{rem}
The assumption in Theorem~\ref{lowerbound-equality}(1) gives that the basic nest surrounds a pole. This follows from the proof.
\end{rem}
Theorem~\ref{lowerbound} gives a lower bound for the period of a Herman ring (if exists) in terms of the number of $H$-relevant poles. The following theorem provides a lower bound for the period of a periodic Fatou component containing an omitted value in terms of the number of $H$-relevant poles.

\begin{thm}
Let  $U$ be a periodic Fatou component of $f$ such that its closure contains at least one omitted value. Then for every Herman ring $H$ of $f$, the number of $H$-relevant poles is strictly less than the period of $U$. In particular, if $U$ is invariant or $2$-periodic then $f$ has no Herman ring.
\label{periodicfatoucomponent}
\end{thm}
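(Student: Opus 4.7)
The plan is to use $v\in O_f\cap\overline U$ to trap $U$ inside the innermost ring $H_1$ with respect to $O_f$ and then propagate $U$ along the basic chain $\{H_1,\ldots,H_n\}$ until $U_n$ lies outside $H_{n+1}$; this will show that $U_0$ is distinct from each of $U_1,\ldots,U_n$, forcing $q\ge n+1$, and combined with $h\le n$ (Lemma~\ref{poles-lengthofbc}) this gives $h<q$.

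First I would fix $v\in O_f\cap\overline U$ and prove $U\subset B(H_1)$. Because $H_1$ surrounds $O_f$, $v\in B(H_1)$; because $v$ is omitted, $v\notin f(\mathbb{C})\supset U$, and so $v\in\partial U$. The closure $\overline{H_1}$ is disjoint from the open set $B(H_1)$, forcing $U\neq H_1$, and distinct Fatou components are disjoint. Since $U$ is connected and disjoint from $H_1$ while $U\cap B(H_1)\neq\emptyset$ (any sufficiently small neighbourhood of $v$ meets $U$ and lies in $B(H_1)$), it lies in the bounded complementary component of $H_1$, i.e.\ $U\subset B(H_1)$.

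For $1\le i\le n-1$ the ring $H_i$ surrounds no pole, so by Lemma~\ref{one-one} $f$ is one--one on $\overline{B(H_i)}$ and a boundary analysis gives $f(B(H_i))=B(H_{i+1})$. A short induction --- ruling out $U_{k+1}=H_{k+2}$, which would force $f(U_k)\subset H_{k+2}\cap B(H_{k+2})=\emptyset$ --- then shows $U_k\subset B(H_{k+1})$ for $0\le k\le n-1$. Since $H_n$ surrounds its pole, $f(B(H_n))$ is the unbounded complementary component of $H_{n+1}$, and the same reasoning puts $U_n$ in this exterior. For the distinctness step, observe that for $2\le j\le n$ we have $B(H_j)=f^{j-1}(B(H_1))\subset f(\mathbb{C})$, so $B(H_j)\cap O_f=\emptyset$ and $H_j$ surrounds neither $O_f$ nor $H_1$; as $H_1$ is innermost it does not surround $H_j$ either, so the disjoint Jordan curves bounding $B(H_1)$ and $B(H_j)$ are unnested and $B(H_1)\cap B(H_j)=\emptyset$. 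This gives $U_0\neq U_k$ for $1\le k\le n-1$. Applying Lemma~\ref{basiclemma} to $H_n$ shows $H_{n+1}$ surrounds $O_f$, which with the innermost property of $H_1$ forces $B(H_1)\subset B(H_{n+1})$, disjoint from the exterior of $H_{n+1}$ that contains $U_n$; hence $U_0\neq U_n$ as well, so $q\ge n+1>n\ge h$.

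The ``in particular'' conclusion follows at once: the number of $H$-relevant poles is at least two for any Herman ring of a function in $M_o$ (as remarked in Section~2 just after the one-pole theorem), so $h<q$ is incompatible with $q\in\{1,2\}$, meaning $f$ can have no Herman ring. The main obstacle I anticipate is the careful bookkeeping of the containments along the chain, particularly ruling out coincidences of the form $U_k=H_{k+1}$; these are all handled by the empty-image argument used throughout.
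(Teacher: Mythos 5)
Your proof is correct and its first two steps coincide with the paper's: you place $U$ inside $B(H_1)$ using the omitted value in $\overline U$, and you push $U$ along the basic chain to get $U_k\subset B(H_{k+1})$ (the paper phrases this as ``$H_i$ surrounds $U_i$'' via the Maximum Modulus Principle). Where you genuinely diverge is the final step. The paper never compares $U_0$ with the individual $U_k$; instead it notes that $U_1,\dots,U_l$ are all bounded while $U_q$ must be unbounded, because $f(U_q)\subseteq U_1$ and $\overline{U_1}$ contains an omitted value (Lemma 2.1 of \cite{tnayak-2015}), whence $q>l\geq h$ in one stroke. You replace this external lemma by a self-contained planar argument: $B(H_1)$ is disjoint from $B(H_j)$ for $2\le j\le n$ (since $B(H_j)=f(B(H_{j-1}))$ misses $O_f$, so $H_j$ surrounds neither $O_f$ nor $H_1$, and $H_1$, being innermost, does not surround $H_j$) and is disjoint from the exterior of $H_{n+1}$, so $U_0\neq U_k$ for $1\le k\le n$ and $q\ge n+1$. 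Both routes yield the same bound $q\ge l+1>h$, and the ``in particular'' clause is handled identically. Two small remarks. First, the paper treats the case $U=H_j$ separately, disposing of it with Theorem~\ref{lowerbound}; you explicitly exclude only $U=H_1$, but your containment $U\subset B(H_1)$ combined with the innermost property of $H_1$ in fact rules out $U=H_j$ for $j\neq 1$ as well, so nothing is lost. Second, your assertion that $v\in\partial U$ because $U\subset f(\mathbb{C})$ is both unnecessary and not obviously justified; all you actually use is $U\cap B(H_1)\neq\emptyset$, which holds whether $v$ lies in $U$ or on its boundary.
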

\begin{proof}
Let $U$ be a $q$-periodic Fatou component such that its closure contains an omitted value of $f$ and $\{U=U_1,~U_2,\dots , U_q \}$ be the cycle. Let $H$ be any Herman ring of $f$ and $h$ be the number of $H$-relevant poles. Let $\{ H_1, H_2, \dots, H_l \}$ be the basic chain. If $H_j=U$ for some $j$ then $p \geq \frac{h(h+1)}{2}$ by Theorem \ref{lowerbound}. But $h < \frac{h(h+1)}{2}$ since $h \geq 2$. This gives that $h < q$ as desired.

Now suppose that $H_j \neq U$ for any $j$. Let $\{ H_1, H_2, \dots, H_l \}$ be the basic chain. Since $H_1$ surrounds $O_f$, it surrounds $U_1$. This implies that $H_i$ surrounds $U_i$ for $i=1,~2, \dots,~l$ by the Maximum Modulus Principle. Further, all such $U_is$ are bounded.  But Lemma 2.1, \cite{tnayak-2015} states that for a given region $G$, if the closure of $f(G)$ contains an omitted value then $G$ is unbounded. Hence $U_q$ is unbounded. Therefore $l<q$. Note that $h \leq l$ by Lemma ~\ref{poles-lengthofbc}. Thus $h < q $. In other words, the number of $H$-relevant poles is strictly less than the period of $U$.
\par
If $U$ is invariant or $2$-periodic then $q=1$ or $2$ and consequently $h<2$ which is not possible as number of $H$-relevant poles is at least $2$ ( Lemma 2. 11,  \cite{small}). Thus $f$ has no Herman ring if $U$ is invariant or $2$-periodic.
\end{proof}

\begin{rem}
Examples of functions in $M_o$ with periodic Fatou components containing an omitted value can be found in ~\cite{tkmgpp-unboundedsingular, tanhexp}.
\end{rem}
\begin{rem}
If $U$ is a $3$-periodic Fatou component of $f$ containing an omitted value and $H$ is a Herman ring of $f$,  then the number of $H$-relevant poles is two by Theorem~\ref{periodicfatoucomponent}. 
It follows from the proof that $H_1$ surrounds $U$. Since one of $U, U_1, U_2$ is unbounded, the length of the basic chain is at most two. In fact, it is exactly two. Now it follows from Lemma~\ref{hrp-n-lbc} that there are only two nests. Further, each nest surrounds exactly one $H$-relevant pole. For two Herman rings $H, H'$ belonging to two different cycles, the set of $H$-relevant poles coincides with the set of $H'$-relevant poles. 
\end{rem}
\begin{rem}
It is shown in ~\cite{small} that the length of the basic chain is three for every $4$-periodic Herman ring. It follows from the previous remark that, if $f$ has a $3$-periodic Fatou component containing an omitted value then it has no $4$-periodic Herman ring.
\end{rem}
\newpage

\baselineskip=12pt

\end{document}